\newcommand{\norm}[1]{\lVert#1\rVert}
\newcommand{\ipr}[2]{\left\langle #1, #2 \right\rangle}
\newcommand{\numbersystem}[1]{\mathbb{#1}}
\newcommand{\R}{\numbersystem{R}}
\newcommand{\Sphere}{\mathbb{S}}
\newcommand{\define}[1]{\emph{#1}}
\theoremstyle{plain}
\newtheorem{theorem}{Theorem}
\newtheorem{lemma}{Lemma}
\newtheorem{corollary}[theorem]{Corollary}
\theoremstyle{definition}
\begin{document}

\bibliographystyle{amsplain}

\title{A new proof of V\'azsonyi's conjecture}
\author{Konrad J. Swanepoel}
\thanks{This material is based upon work supported by the South African National Research Foundation.}
\address{Department of Mathematical Sciences, University of South Africa, PO Box 392, Pretoria 0003, South Africa}
\email{swanekj@unisa.ac.za}

\begin{abstract}
We present a self-contained proof that the number of diameter pairs among $n$ points in Euclidean $3$-space is at most $2n-2$.
The proof avoids the ball polytopes used in the original proofs by Gr\"unbaum, Heppes and Straszewicz.
As a corollary we obtain that any three-dimensional diameter graph can be embedded in the projective plane.
\end{abstract}

\maketitle

Let $S$ be a set of $n$ points of diameter $D$ in $\R^d$.
Define the \define{diameter graph} on $S$ by joining all \emph{diameters}, i.e., point pairs at distance $D$.
The following theorem was conjectured by V\'azsonyi, as reported in \cite{Erdos46}.
It was subsequently independently proved by Gr\"unbaum \cite{Grunbaum}, Heppes \cite{Heppes} and Straszewicz \cite{Str}.
\begin{theorem}
The number of edges in a diameter graph on $n\geq 4$ points in $\R^3$ is at most $2n-2$.
\end{theorem}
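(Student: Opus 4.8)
The plan is to argue by induction on $n$, with trivial base case $n=4$: here the diameter graph has at most $\binom{4}{2}=6=2n-2$ edges. For the inductive step, suppose the diameter graph $G$ of $S$ has a vertex $v$ of degree at most $2$. Then we may delete it: the set $S\setminus\{v\}$ still has $n-1\geq 4$ points, and (since $G$ then has more than two edges, so some diameter avoids $v$) it still has diameter $D$, with diameter graph exactly $G-v$; hence by induction $|E(G)|\le\deg(v)+|E(G-v)|\le 2+(2(n-1)-2)=2n-2$. So the whole theorem reduces to the following statement.

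\smallskip
\noindent\textbf{Key Lemma.} \emph{Every diameter graph on $n\geq 5$ points in $\R^3$ has a vertex of degree at most $2$} — equivalently, the only diameter graph in $\R^3$ with minimum degree $\geq 3$ is $K_4$ (the regular tetrahedron).
\smallskip

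\noindent To attack the Key Lemma I would first record the local spherical picture. Since $D$ is the diameter of $S$, every point of $S$ lies in $B(u,D)$ for each $u\in S$; in particular, if $uv$ is an edge, then every neighbour $x$ of $u$ satisfies $\ipr{x-u}{v-u}\geq D^2/2$ (the elementary computation from $|x-u|=|v-u|=D$ and $|x-v|\le D$), so the neighbours of $u$ lie on $\partial B(u,D)$ inside the spherical cap of angular radius $60^{\circ}$ about the direction $v-u$. Because any two neighbours of $u$ are at distance $\le D$, i.e.\ at angular distance $\le 60^{\circ}$ on $\partial B(u,D)$, a spherical Jung-type estimate confines all of them to a cap of angular radius strictly less than $60^{\circ}$; and the same holds at $v$, with the two caps pointing at each other.

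The heart of the matter is then a rigidity argument. Assuming an edge $uv$ with $\deg u,\deg v\geq 3$, one must use the mutual constraints $|ab|\le D$ between the neighbours $a$ of $u$ and the neighbours $b$ of $v$, together with $|ua|=|vb|=|uv|=D$, to force the configuration to collapse onto the regular tetrahedron, so that $n=4$. Converting the soft statement ``all the relevant points are squeezed into two tiny antipodal spherical caps'' into an airtight exclusion of every minimum-degree-$3$ configuration other than $K_4$ is where I expect all the difficulty to lie: I anticipate it needs an explicit coordinatisation of the extremal positions of the caps, or a compactness argument showing that no deformation away from the tetrahedron can keep all distances $\le D$ while keeping the required ones equal to $D$. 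Once the Key Lemma is established the induction closes at once, and the stated embedding corollary then follows by realising the resulting sparse graph inside the projective plane.
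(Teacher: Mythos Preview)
Your reduction is clean, but the entire weight of the theorem has been pushed into the Key Lemma, and you do not prove it. You say so yourself: the passage from ``two small antipodal caps'' to ``the configuration must be the regular tetrahedron'' is ``where I expect all the difficulty to lie,'' and you offer only the hope of a coordinatisation or compactness argument. That is not a proof; it is a restatement of the problem.

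Worse, the specific line of attack you sketch cannot work as written. You propose to fix a single edge $uv$ with $\deg u,\deg v\geq 3$ and argue that the mutual distance constraints between the neighbours of $u$ and the neighbours of $v$ force the tetrahedron. But this local hypothesis is far too weak: take the regular tetrahedron $a,b,c,d$ and add a fifth point $e$ on the short arc of the circle $\{x:\,|xa|=|xb|=1\}$ strictly between $c$ and $d$. Then $\deg a=\deg b=4$, yet the configuration has five points and a vertex $e$ of degree~$2$. So an edge with both endpoints of degree $\geq 3$ is perfectly compatible with $n\geq 5$; any valid argument for the Key Lemma must be genuinely global, using that \emph{every} vertex has degree $\geq 3$, and you give no indication of how to exploit that. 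Even the bare truth of the Key Lemma is not obvious --- deciding whether some diameter graph on $n\geq 5$ points can have minimum degree $3$ already seems to require structural information of the same depth as V\'azsonyi's conjecture itself (for instance, the $n=5$ case alone requires ruling out the wheel $W_4$, which takes a nontrivial Gram-matrix rank computation).

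The paper's proof takes an entirely different route and never looks for low-degree vertices. It maps each point $x$ to an antipodal pair of spherical regions $R(x),-R(x)$ on $\Sphere^2$, shows via an inner-product lemma that distinct regions of the same colour are disjoint and opposite-colour regions touch only at the single point $y-x$ when $xy$ is a diameter, and thereby obtains a centrally symmetric crossing-free drawing of the bipartite double cover of the diameter graph on the sphere. Euler's formula for bipartite planar graphs then gives $4n-4$ edges in the cover, hence $2n-2$ in the original. No induction, no degree-peeling, no rigidity analysis.
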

All three proofs (see \cite[Theorem 13.14]{PachAgarwal}) use the ball polytope obtained by taking the intersection of the balls of radius $D$ centred at the points.
However, these ball polytopes do not behave the same as ordinary polytopes.
In particular, their graphs need not be $3$-connected, as shown by Kupitz, Martini and Perles in \cite{KMP}, where a detailed study of the ball polytopes associated to the above theorem is made.
The proof presented here avoids the use of ball polytopes.
\begin{theorem}\label{maintheorem}
Any diameter graph in $\R^3$ has a bipartite double covering that has a centrally symmetric drawing on the $2$-sphere.
\end{theorem}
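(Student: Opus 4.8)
The plan is to build the drawing explicitly out of the unit directions of the diameters, avoiding ball polytopes altogether. Normalise $D=1$ and write $\widetilde{G}$ for the bipartite double cover of the diameter graph $G$: it has vertex set $V(G)\times\{+,-\}$, edges $\{(p,+),(q,-)\}$ and $\{(p,-),(q,+)\}$ for each $pq\in E(G)$, and deck transformation $(p,\epsi)\mapsto(p,-\epsi)$, which we shall realise by the antipodal map of $\Sphere^2$. (Isolated vertices of $G$ give rise to antipodal pairs of isolated vertices, placed arbitrarily, and may be ignored.)

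A local observation comes first. If $q,q'$ are neighbours of $p$ then $\norm{q-q'}\le1=\norm{p-q}=\norm{p-q'}$, so $\myangle qpq'\le\pi/3$ by the law of cosines; hence $U_p:=\{\un{q-p}:pq\in E(G)\}\subset\Sphere^2$ has spherical diameter at most $\pi/3$ and so lies in a unique smallest spherical cap $C_p$, of angular radius $r_p\le\arccos\sqrt{2/3}<\pi/3$; let $\vn_p$ be its centre. The edges at $p$ thereby carry a natural cyclic order (project $U_p$ to the tangent plane at $\vn_p$ and read it off about the centre, breaking ties arbitrarily), so the drawing to be built is locally embedded at every vertex.

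The geometric core is an \emph{interleaving} property of two disjoint diameters $ab,cd$ (i.e.\ with $a,b,c,d$ distinct). From the Cauchy--Schwarz estimates $\ipr{c-a}{\un{b-a}}\le\norm{c-a}\le1$ and $\ipr{c-a}{\un{b-a}}=1+\ipr{c-b}{\un{b-a}}\ge1-\norm{c-b}\ge0$, and their analogues, one gets $\ipr{c-a}{\un{b-a}},\ \ipr{d-a}{\un{b-a}}\in[0,1]$ and, symmetrically, $\ipr{a-c}{\un{d-c}},\ \ipr{b-c}{\un{d-c}}\in[0,1]$. Thus $[a,b]$ and $[c,d]$ each lie in both slabs $\{x:\ipr{x-a}{\un{b-a}}\in[0,1]\}$ and $\{x:\ipr{x-c}{\un{d-c}}\in[0,1]\}$, with $[a,b]$ spanning the first and $[c,d]$ the second. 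One deduces that $\un{b-a}$ and $\un{d-c}$ are linearly independent, and that after orthogonal projection onto $\Pi:=\mathrm{span}(\un{b-a},\un{d-c})$ and a linear change of coordinates in $\Pi$ making $\un{b-a},\un{d-c}$ orthonormal, $[a,b]$ and $[c,d]$ become a full-width horizontal chord and a full-height vertical chord of a single rectangle, hence cross transversally; equivalently, the antipodal pairs $\{\un{b-a},\un{a-b}\}$ and $\{\un{d-c},\un{c-d}\}$ interleave on the great circle $\Sphere^2\cap\Pi$, the crossing point of the projected segments lying in the interior of the relevant sub-arcs.

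Now the drawing. Send $(p,+)\mapsto\vn_p$ and $(p,-)\mapsto-\vn_p$ (after a tiny generic perturbation inside the caps $C_p$, to make the $2n$ points distinct and attain general position), draw $\{(p,+),(q,-)\}$ as the shorter great-circle arc from $\vn_p$ to $-\vn_q$, and $\{(p,-),(q,+)\}$ as its antipode; central symmetry is then automatic. It remains to verify that there are no crossings. \emph{(i)} Arcs meeting at a common vertex of $\widetilde{G}$: general position. \emph{(ii)} Two distinct edges $pq,pr$ at a vertex $p$ of $G$: each of their arcs has both endpoints, hence all of itself, in the convex cap of radius $<\pi/3$ about $\un{q-p}$, respectively $\un{r-p}$, and since $\un{q-p}$ and $\un{r-p}=-\un{p-r}$ make an angle $\pi-\myangle(\un{q-p},\un{p-r})\ge2\pi/3$, those caps are disjoint. \emph{(iii)} Two vertex-disjoint edges $pq,rs$: invoke the interleaving property --- the transversal crossing of the projected segments, together with the fact that each $\vn_x$ lies within $\pi/3$ of the \emph{whole} cluster $U_x$, pins the positions of $\vn_p,-\vn_q,\vn_r,-\vn_s$ and of their antipodes on $\Sphere^2$ down precisely enough to force that none of the four arcs coming from $\{p,q\}$ and $\{r,s\}$ meet. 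Converting, in step~(iii), the planar transversal crossing into non-crossing of the four spherical arcs is the step I expect to require the most care, and is the principal obstacle; the rest is bookkeeping. Granting it, $\widetilde{G}$ is drawn centrally symmetrically on $\Sphere^2$, and passing to the antipodal quotient embeds $G$ in $\mathbb{RP}^2$ (the stated corollary); finally Euler's formula for the planar bipartite graph $\widetilde{G}$, each face of which is bounded by at least four edges, yields $2\,\card{E(G)}=\card{E(\widetilde{G})}\le 2\,\card{V(\widetilde{G})}-4=4n-4$, i.e.\ $\card{E(G)}\le2n-2$.
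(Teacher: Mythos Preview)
Your plan has a genuine gap exactly where you flag it: step~(iii), the non-crossing of arcs coming from two vertex-disjoint diameters $pq$ and $rs$. The ``interleaving property'' you isolate cannot carry this. As stated, it says only that the antipodal pairs $\{\un{q-p},\un{p-q}\}$ and $\{\un{s-r},\un{r-s}\}$ alternate on the great circle $\Sphere^2\cap\Pi$ --- but \emph{any} two distinct antipodal pairs alternate on the great circle they span, so this is vacuous. The transversal crossing of the projected segments in $\Pi$ is a real fact, yet you give no mechanism converting it into a non-crossing statement for great-circle arcs between points $\vect{n}_p,-\vect{n}_q,\vect{n}_r,-\vect{n}_s$ that are merely known to lie in caps of radius $\arccos\sqrt{2/3}$ about the four direction vectors. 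Those caps can overlap heavily when $\un{q-p}$ and $\un{s-r}$ are close (you show they cannot coincide, but they can be arbitrarily near), and since each $\vect{n}_x$ depends on \emph{all} neighbours of $x$, the four arc-endpoints are largely decoupled from the single crossing point in $\Pi$. Writing ``granting it'' does not close this; it is the heart of the theorem.

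The paper avoids this difficulty by not using single great-circle arcs. Instead of the enclosing caps $C_p$, it takes the spherical convex hulls $R(p)$ of your sets $U_p$ and proves two disjointness lemmas: $R(p)\cap R(p')=\emptyset$ for $p\neq p'$, and $R(p)\cap(-R(q))$ is either empty or the single point $q-p$ (exactly when $pq$ is a diameter). Both follow from one short inequality: if $x_1,\dots,x_k$ and $\sum_i\lambda_ix_i$ are unit vectors with $\lambda_i\ge0$ and $\norm{y-x_i}\le 1$ for every $i$, then $\norm{y-\sum_i\lambda_ix_i}\le1$. The edge from $x_r$ to $y_b$ is then drawn as a \emph{broken} arc through the corner $y-x$, one half inside $R(x)$ and the other inside $-R(y)$; since distinct regions meet only at such corners, no crossings are possible and no case~(iii) analysis is needed. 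Your caps $C_p\supseteq R(p)$ throw away precisely the tight boundary information that makes this work, and your direct geodesic arcs need not stay inside $R(p)\cup(-R(q))$.
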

In fact, each point $x\in S$ will correspond to an antipodal pair of points $x_r$ and $x_b$ on the sphere, with $x_r$ coloured red and $x_b$ blue.
Each edge $xy$ of the diameter graph will correspond to two antipodal edges $x_ry_b$ and $x_by_r$ on the sphere, giving a properly $2$-coloured graph on $2n$ vertices.
The drawing will be made such that no edges cross.
By Euler's formula there will be at most $4n-4$ edges, hence at most $2n-2$ edges in the diameter graph.
By identifying opposite points of the sphere we further obtain:
\begin{corollary}
Any diameter graph in $\R^3$ can be embedded in the projective plane such that all odd cycles are noncontractible.
\end{corollary}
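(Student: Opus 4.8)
The plan is to obtain the corollary from Theorem~\ref{maintheorem} by passing to the antipodal quotient $\pi\colon \Sphere^2\to \Sphere^2/\{\pm 1\}=\mathbb{RP}^2$, which is the standard double cover of the projective plane. Starting from the centrally symmetric drawing of the bipartite double covering $\widetilde G$ supplied by the theorem, I would apply $\pi$ and note that on vertices $\pi$ identifies the antipodal pair $x_r,x_b$ to a single point $[x]$, and on edges it identifies the antipodal pair $x_ry_b$, $x_by_r$ to a single arc from $[x]$ to $[y]$; this identification is precisely the covering projection $\widetilde G\to G$. So $\pi$ transports the spherical drawing of $\widetilde G$ to a drawing of the diameter graph $G$ in $\mathbb{RP}^2$.

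The first real step is to check that this drawing is an \emph{embedding}. The $2n$ vertices $x_r,x_b$ ($x\in S$) are pairwise distinct points of $\Sphere^2$, and the spherical drawing has no crossings. If two vertices of $G$ collided under $\pi$, two of the $2n$ spherical vertices would be antipodal; since the vertex set is symmetric under the antipodal map, two of them would then coincide, a contradiction, so $G$ receives $n$ distinct vertices. Since the edge set of the spherical drawing is antipodally symmetric and crossing-free, no lifted edge meets any other lifted edge or the antipode of any other lifted edge except in shared endpoints, and no lifted vertex lies in the relative interior of a lifted edge; because $x_r\neq x_b$ and $G$ has no loops, one also checks that $x_ry_b$ contains no antipodal pair, so each edge maps homeomorphically onto its image. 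It follows that distinct edges of $G$ meet only in common endpoints, i.e., the drawing in $\mathbb{RP}^2$ is an embedding.

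The second step is to analyse contractibility by lifting. A closed walk in $\mathbb{RP}^2$ is null-homotopic if and only if it lifts to a closed walk in $\Sphere^2$, and is noncontractible if and only if its lift joins a point to its antipode. Given a cycle $x^{(0)}x^{(1)}\cdots x^{(k)}=x^{(0)}$ in $G$, lift it starting from $x^{(0)}_r$; since $\widetilde G$ is properly $2$-coloured, the lift alternates between red and blue vertices, so after $k$ steps it ends at $x^{(0)}_r$ when $k$ is even and at $x^{(0)}_b=-x^{(0)}_r$ when $k$ is odd. Hence every odd cycle of $G$ lifts to an antipodal path on $\Sphere^2$ and is therefore noncontractible in $\mathbb{RP}^2$, which is exactly the assertion. (Every even cycle lifts to a genuine loop on the simply connected sphere and so bounds a disc in $\mathbb{RP}^2$.)

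The argument is essentially formal once Theorem~\ref{maintheorem} is granted. The one place that warrants care is the verification that the quotient of the centrally symmetric spherical drawing is a true embedding and not merely a drawing with crossings or overlaps — that is, controlling the incidences that the antipodal identification could conceivably create — and this is where one genuinely uses both the crossing-freeness of the spherical drawing and the antipodal symmetry of its vertex and edge sets.
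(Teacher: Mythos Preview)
Your argument is correct and follows exactly the route the paper indicates: the paper simply says ``by identifying opposite points of the sphere we further obtain'' the corollary, and you have spelled out that antipodal-quotient argument in full, including the verification that the quotient drawing is a genuine embedding and the lifting criterion for contractibility. There is nothing to add.
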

Therefore, any two odd cycles intersect, and we regain the following theorem of Dol'nikov \cite{Dolnikov}:
\begin{corollary}
Any two odd cycles in a diameter graph on a finite set in $\R^3$ intersect.
\end{corollary}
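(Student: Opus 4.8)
The plan is to deduce this directly from the preceding corollary by a short topological argument in the projective plane. By that corollary, the diameter graph $G$ on our finite set $S\subset\R^3$ has an embedding in the projective plane $\R P^2$ in which every odd cycle is drawn as a noncontractible closed curve. Fix such an embedding. Let $C_1$ and $C_2$ be two odd cycles of $G$, and suppose, for contradiction, that they share no vertex. Since distinct edges of an embedded graph are interior-disjoint arcs meeting only at common endpoints, the two vertex-disjoint cycles $C_1$ and $C_2$ are drawn as \emph{disjoint} simple closed curves $\gamma_1,\gamma_2\subset\R P^2$, and by the choice of embedding each of $\gamma_1,\gamma_2$ is noncontractible.

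The argument is now completed by the classical fact that $\R P^2$ does not contain two disjoint noncontractible simple closed curves. One way I would phrase this: a noncontractible simple closed curve $\gamma_1\subset\R P^2$ is one-sided, and $\R P^2\setminus\gamma_1$ is homeomorphic to an open disk; since $\gamma_2$ misses $\gamma_1$, it lies in this open disk, and a simple closed curve in a disk bounds a disk there, hence is contractible in $\R P^2$ — contradicting the noncontractibility of $\gamma_2$. Alternatively one can use $\Z/2$ coefficients: $H_1(\R P^2;\Z/2)\cong\Z/2$ is generated by the class of a noncontractible simple closed curve, the mod-$2$ intersection pairing on $H_1(\R P^2;\Z/2)$ is nondegenerate, and that generator has self-intersection $1$; hence $[\gamma_1]\cdot[\gamma_2]=1$, which is impossible for disjoint curves. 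Either way we obtain a contradiction, so $C_1$ and $C_2$ must have a common vertex.

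I do not expect a genuine obstacle here: the content is entirely carried by the previous corollary, and the remaining step is a standard fact about $\R P^2$. The only point needing a little care is bookkeeping — fixing the meaning of ``intersect'' as ``share a vertex'' (the natural reading for cycles in a graph), and checking that vertex-disjointness of $C_1,C_2$ really does yield disjoint curves, which is immediate from the embedding being a proper drawing with no crossings. If desired, the topological lemma about $\R P^2$ can be stated as a self-contained claim and justified in one line by the cutting-along-$\gamma_1$ description above, keeping the whole deduction elementary.
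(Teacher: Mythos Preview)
Your proposal is correct and matches the paper's approach: the paper derives this corollary in one word (``Therefore'') from the projective-plane embedding, and your argument is exactly the natural unpacking of that word---two vertex-disjoint odd cycles would give two disjoint noncontractible simple closed curves in $\R P^2$, which is impossible. Your bookkeeping (interpreting ``intersect'' as sharing a vertex, and noting that vertex-disjoint cycles in an embedded graph yield disjoint curves) is appropriate and fills in precisely what the paper leaves implicit.
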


\begin{proof}[Proof of Theorem~\ref{maintheorem}]
Without loss we assume from now on that $D=1$.
Let $\Sphere^2$ denote the sphere in $\R^3$ with centre the origin and radius $1$.
We may repeatedly remove all vertices of degree at most $1$ in the diameter graph.
Since such vertices can easily be added later, this is no loss of generality.
For each $x\in S$, let $R(x)$ be the intersection of $\Sphere^2$ with the cone generated by $\{y-x : \text{$xy$ is a diameter}\}$.
Each $R(x)$ is a convex spherical polygon with great circular arcs as edges.
(If $x$ has degree $2$ then $R(x)$ is an arc).
Colour $R(x)$ red and $B(x):=-R(x)$ blue.
Assume for the moment the following two properties of these polygons:
\begin{lemma}\label{l1}
If $x\neq y$, then $R(x)$ and $R(y)$ are disjoint.
\end{lemma}
\begin{lemma}\label{l2}
If $R(x)$ and $B(y)$ intersect, then $xy$ is a diameter and $R(x)\cap B(y)=\{y-x\}$.
\end{lemma}
For each $x\in S$ we choose any $x_r$ in the interior of $R(x)$ and let $x_b=-x_r$.
(If $R(x)$ is an arc we let $x_r$ be in its relative interior.)
Draw arcs inside $R(x)$ from $x_r$ to all the vertices of $R(x)$, as well as antipodal arcs from $x_b$ to the vertices of $B(x)$.
This gives a centrally symmetric drawing of a $2$-coloured double covering of the diameter graph.
By Lemmas~\ref{l1} and \ref{l2} no edges cross, and the theorem follows.
\end{proof}
The following proofs of Lemmas~\ref{l1} and \ref{l2} are dimension independent, which gives a double covering on $\Sphere^{d-1}$ of any diameter graph in $\R^d$.
\begin{lemma}\label{l3}
Let $x_1,\dots,x_k$ and $\sum_{i=1}^k\lambda_i x_i$ be unit vectors in $\R^d$, with all $\lambda_i\geq 0$.
Suppose that for some $y\in\R^d$, $\norm{y-x_i}\leq 1$ for all $i=1,\dots,k$.
Then $\norm{y-\sum_{i=1}^k\lambda_i x_i}\leq 1$.
\end{lemma}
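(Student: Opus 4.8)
The plan is to translate both the hypotheses and the conclusion into statements about inner products, exploiting that all of $x_1,\dots,x_k$ and their combination $z:=\sum_{i=1}^k\lambda_i x_i$ are \emph{unit} vectors. First I would record the elementary observation that for any unit vector $u$, expanding $\norm{y-u}^2=\norm{y}^2-2\ipr{y}{u}+1$ shows that $\norm{y-u}\le 1$ is equivalent to $\ipr{y}{u}\ge\half\norm{y}^2$. Applying this with $u=x_i$ turns the hypothesis into $\ipr{y}{x_i}\ge\half\norm{y}^2$ for every $i$, and applying it with $u=z$ shows that the conclusion $\norm{y-z}\le 1$ is equivalent to the single inequality $\ipr{y}{z}\ge\half\norm{y}^2$.

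Next I would bound $\ipr{y}{z}$ directly. By bilinearity of the inner product and nonnegativity of the $\lambda_i$,
\[
\ipr{y}{z}=\sum_{i=1}^k\lambda_i\ipr{y}{x_i}\ge\half\norm{y}^2\sum_{i=1}^k\lambda_i.
\]
To conclude it then suffices to know that $\sum_{i=1}^k\lambda_i\ge 1$, and this is the one step that is not purely mechanical: since $z$ is a unit vector, the triangle inequality gives $1=\norm{z}=\norm{\sum_{i=1}^k\lambda_i x_i}\le\sum_{i=1}^k\lambda_i\norm{x_i}=\sum_{i=1}^k\lambda_i$. Because $\half\norm{y}^2\ge 0$, combining the two displayed facts yields $\ipr{y}{z}\ge\half\norm{y}^2$, which is exactly the reformulated conclusion.

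I do not anticipate a real obstacle here: once the norm conditions are rewritten as the ``half-square'' inner-product conditions, the only thing to notice is that being a unit vector forces the coefficient sum of $z$ to be at least $1$. I would present the argument in that order — reduce each norm inequality to an inner-product inequality, expand $\ipr{y}{z}$ by linearity, invoke the triangle inequality for the coefficient bound, and combine — and remark that the computation never used the dimension $d$, matching the sentence preceding the lemma.
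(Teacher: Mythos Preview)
Your proof is correct and is essentially the same argument as the paper's: both expand the norm conditions via inner products, use the triangle inequality to obtain $\sum_i\lambda_i\ge 1$, and combine. The only difference is cosmetic---you first recast both hypothesis and conclusion as the ``half-square'' inequality $\ipr{y}{u}\ge\half\norm{y}^2$, whereas the paper expands $\norm{y-\sum_i\lambda_i x_i}^2$ directly and bounds it by $(1-\sum_i\lambda_i)\norm{y}^2+1\le 1$.
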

\begin{proof}
By the triangle inequality,
\begin{equation}\label{one}
1\leq\norm{\sum_{i=1}^k\lambda_i x_i}\leq\sum_{i=1}^k\lambda_i.
\end{equation}
Expanding $\norm{y-x_i}^2\leq 1$ by inner products,
\begin{equation}\label{two}
-2\ipr{x_i}{y}\leq -\norm{y}^2.
\end{equation}
Therefore,
\begin{align*}
\norm{y-\sum_{i=1}^k\lambda_i x_i}^2 &= \norm{y}^2-2\sum_{i=1}^k \ipr{x_i}{y}+1\\
&\leq \Bigl(1-\sum_{i=1}^k\lambda_i\Bigr)\norm{y}^2+1 \quad\text{by \eqref{two}}\\
&\leq 1 \qquad\text{by \eqref{one}.} \qedhere
\end{align*}
\end{proof}

\begin{proof}[Proof of Lemma~\ref{l1}]
Let the neighbours of $x$ be $x+x_i$, and the neighbours of $y$ be $y+y_j$, with the $x_i$ and $y_j$ unit vectors.
Suppose that
\[\sum_i\lambda_i x_i=\sum_j\mu_j y_j\in R(x)\cap R(y) \text{ with }\lambda_i,\mu_j\geq 0. \]
Since $\norm{x+x_i-y}\leq 1$ for all $i$, Lemma~\ref{l3} gives
\[\norm{x+\sum_i\lambda_i x_i -y}\leq 1.\]
Similarly, Lemma~\ref{l3} applied to $\norm{x-y-y_j}\leq 1$ gives
\[ \norm{x-y-\sum_j\mu_j y_j}\leq 1.\]
By the triangle inequality,
\begin{align*}
2 &=\norm{2\sum_i\lambda_i x_i}\\
&= \norm{(x+\sum_i\lambda_i x_i-y)-(x-y-\sum_j\mu_j y_j)}\\
&\leq \norm{x+\sum_i\lambda_i x_i-y}+\norm{x-y-\sum_j\mu_j y_j}\\
&\leq 2.
\end{align*}
Since we have equality throughout, $x+\sum_i\lambda_i x_i-y$ and $-x+y+\sum_j\mu_j y_j$ are unit vectors in the same direction, hence are equal, which gives $x=y$.
\end{proof}

\begin{proof}[Proof of Lemma~\ref{l2}]
Since $\norm{x_i-x_j}\leq 1$ for all $i, j$, $R(x)$ is properly contained in an open hemisphere of $\Sphere^2$, hence $R(x)\cap B(x)=\emptyset$.
Thus without loss of generality, $x\neq y$.
As before, let the neighbours of $x$ be $x+x_i$, and the neighbours of $y$ be $y+y_j$, with the $x_i$ and $y_j$ unit vectors.
Suppose that $\sum_i\lambda_i x_i=-\sum_j\mu_j y_j\in R(x)\cap B(y)$ with $\lambda_i,\mu_j\geq 0$.
For a fixed $j$ we have that $\norm{x+x_i-y-y_j}\leq 1$ for all $i$.
Lemma~\ref{l3} then gives
\[\norm{x+\sum_i\lambda_i x_i-y-y_j}\leq 1\quad\text{for all $j$}.\]
Again by Lemma~\ref{l3},
\[\norm{x+\sum_i\lambda_i x_i-y-\sum_j\mu_j y_j}\leq 1.\]
By the triangle inequality,
\begin{align*}
2 &=\norm{2\sum_i\lambda_i x_i}\\
&= \norm{(x+\sum_i\lambda_i x_i-y-\sum_j\mu_j y_j)+(y-x)}\\
&\leq \norm{x+\sum_i\lambda_i x_i-y-\sum_j\mu_j y_j}+\norm{y-x}\\
&\leq 2.
\end{align*}
Since we have equality throughout, $x+\sum_i\lambda_i x_i-y-\sum_j\mu_j y_j$ and $y-x$ are unit vectors in the same direction, hence are equal, which gives $x+\sum_i\lambda_i x_i=y$ and $R(x)\cap B(y)=\{y-x\}$.
\end{proof}

\end{document}